\newtheorem{theorem}{Theorem}
\theoremstyle{plain}
\numberwithin{equation}{section}
\begin{document}
\title[Brooks' theorem]{A short proof of Brooks' theorem}

\author{Mariusz Zaj\k{a}c}
\address{Faculty of Mathematics and Information Science, Warsaw University
	of Technology, Koszykowa 75, 00-662 Warsaw, Poland}
\email{M.Zajac@mini.pw.edu.pl}

\begin{abstract}
We give a simple short proof of Brooks' theorem using only induction and greedy coloring, while avoiding issues of graph connectivity. The argument generalizes easily to some extensions of Brooks' theorem, including its variants for list coloring, signed graphs coloring and correspondence coloring.
\end{abstract}

\maketitle

\section{Introduction}
Brooks' theorem \cite{Brooks} is a classic result in graph coloring. It asserts that a connected graph of maximum degree at most $k$ is $k$-colorable, unless it is a clique or an odd cycle. Several substantially different proofs of this theorem are currently known (see the survey \cite{CranstonRabern}). Among the most elegant are the one due to Lov\'{a}sz \cite{Lovasz} and its modifications (e.g. \cite{Bondy}).

In this note we give a short and self-contained proof of Brooks' theorem. Its basic idea is similar to that of \cite{Lovasz}, although the connectivity issues are avoided. The argument can be easily extended to some known generalizations of Brooks' theorem, including the recent results on signed graphs \cite{MacajovaRS}, \cite{Stiebitz} and the more general correspondence coloring \cite{BernshteynKP}, \cite{DvorakPostle}.

\section{The proof}

We will make use of the following obvious observation. Let $G$ be a graph whose maximum degree satisfies $\Delta (G)\leq k$. Suppose that $G$ is partially colored using at most $k$ colors. Let $P=v_1v_2\dots v_j$ be a path in $G$, and assume that the vertices of $P$ are uncolored. Then we may color all vertices from $v_1$ up to $v_{j-1}$ consecutively along $P$, since at the moment of coloring the vertex $v_i$ its neighbor $v_{i+1}$ is yet uncolored. We denote this sequential coloring procedure by {\sc PathColor}$(v_1,v_2,\dots,v_{j-1};v_j)$. Note that after its execution the last vertex $v_j$ of the path $P$ remains uncolored, in particular {\sc PathColor} does nothing if $j=1$.

\begin{theorem}\emph{(Brooks \cite{Brooks})} \label{Theorem Brooks}
Let $k\geq 3$ be a natural number. Let $G$ be a graph with $\Delta(G)\leq k$. If $G$ does not contain a clique on $k+1$ vertices, then $G$ is $k$-colorable.
\end{theorem}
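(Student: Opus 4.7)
I would prove the theorem by induction on $n=|V(G)|$. The base cases are immediate: any graph on at most $k$ vertices has $\Delta<k$ and is $k$-colorable by a single greedy pass. For the inductive step I split according to the minimum degree of $G$.

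Suppose first that some vertex $v\in V(G)$ has $\deg_G(v)<k$. Then $G-v$ still satisfies the hypotheses ($\Delta\le k$, no $K_{k+1}$) and is strictly smaller, so by induction it admits a $k$-coloring. Extending is immediate: $v$ has at most $k-1$ colored neighbors, hence some color in $\{1,\ldots,k\}$ is free for it. This case illustrates the prototype of the argument: one pre-colors a bit of $G$, then finishes by a single greedy step at a vertex whose colored neighborhood is forced to span fewer than $k$ colors.

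Suppose now that $G$ is $k$-regular. Pick any vertex $u$. Since $N(u)$ has $k$ vertices and cannot form a clique (otherwise $\{u\}\cup N(u)$ would be $K_{k+1}$), we may pick $x,y\in N(u)$ with $xy\notin E(G)$. I would pre-color both $x$ and $y$ with color $1$, and then process the remaining vertices in reverse BFS-distance order from $u$ (farthest first, $u$ last), coloring along BFS-tree paths with {\sc PathColor}. Each call to {\sc PathColor} colors a path of still-uncolored vertices and stops just before a vertex that is still uncolored and will be handled in a subsequent call, so every vertex is colored while having at least one uncolored neighbor, and hence sees at most $k-1$ distinct colors. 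When finally $u$ is the only uncolored vertex, its $k$ neighbors use at most $k-1$ distinct colors because $c(x)=c(y)=1$, so a color remains available for $u$.

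The main obstacle will be organizing the {\sc PathColor} invocations so that each non-$u$ vertex is colored as a non-final vertex of some path whose last vertex is still uncolored, even when the vertex's BFS parent happens to be the pre-colored $x$ or $y$. Here the flexibility built into {\sc PathColor} --- that it accepts any path in $G$, not just a single tree edge --- is exactly what lets us reroute around $x$ and $y$, and the assumption $k\ge 3$ provides enough room to do so. Finally, issues of connectivity of $G-x-y$, which are the central technical hurdle in Lov\'asz's proof, are bypassed because the BFS is carried out inside $G$ itself; any connected component of $G$ not containing $u$ is strictly smaller and still satisfies the hypotheses, so the inductive hypothesis colors it independently.
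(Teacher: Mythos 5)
Your reduction to the $k$-regular case and the choice of non-adjacent $x,y\in N(u)$ match the paper, but the core of your inductive step has a genuine gap, and it sits precisely at the point where the paper departs from Lov\'asz. After pre-coloring $x$ and $y$ with the same color, you propose to color the remaining vertices in decreasing order of BFS-distance from $u$ and assert that each vertex other than $u$ can be arranged to have an uncolored neighbor at the moment it is colored. This is exactly the invariant that can fail when the pair $\{x,y\}$ separates part of $G$ from $u$: a vertex $w$ at distance $2$ from $u$ whose only distance-$1$ neighbors lie in $\{x,y\}$ has, at coloring time, all of its farther neighbors already colored and its closer neighbors pre-colored, so it must rely on a same-level uncolored neighbor; the last vertex of such a same-level cluster (for instance, two adjacent vertices $a,b$ hanging off $x$ alone, with their remaining neighbors at distance $3$, in the case $k=3$) can then see $k$ distinct colors, and nothing in your argument rules this out. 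Doing the BFS in $G$ rather than in $G-x-y$ does not bypass the difficulty: the obstruction is not a component of $G$ missing $u$ (which you correctly dispatch by induction) but the fact that the pre-colored pair can be a cut set, which is the very connectivity issue in Lov\'asz's proof. You flag this as ``the main obstacle,'' but the appeal to the flexibility of {\sc PathColor} and to $k\ge 3$ is a hope, not an argument; filling it would force you back into the $2$-connectivity case analysis.

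The paper resolves this by a different device. It grows a single maximal path $P=v_1v_2v_3\dots v_r$ with $v_1=x$, $v_2=v$, $v_3=y$, extended until every neighbor of $v_r$ lies on $P$. If $P$ is Hamiltonian, coloring $v_1$ and $v_3$ alike and running two {\sc PathColor} sweeps that meet at a third neighbor $v_j$ of $v_2$ finishes the graph, with $v_2$ (not $u$ of your setup) as the final vertex saved by its two same-colored neighbors. Otherwise $v_r$ closes a cycle $C=v_j\dots v_r$; the paper colors $G-C$ by induction, gives $v_{l+1}$ the color of an outside neighbor of the last attachment vertex $v_l$, and runs {\sc PathColor} once around $C$ into $v_l$. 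Because the induction is applied to $G-C$ rather than to $G$ minus a pre-colored pair, no cut-set analysis is ever needed. If you want to keep your BFS framing, you would have to either prove the missing ordering invariant (false in general) or add the case split on whether $G-x-y$ keeps every vertex joined to $u$ --- which is the complication this proof is designed to avoid.
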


\begin{proof}
The proof is by induction on the number $n$ of vertices of $G$. For $n \leq k$ the assertion holds trivially.  We may further assume that $G$ is $k$-regular, since otherwise we would delete a vertex of degree strictly less than $k$ and apply induction. Let $v$ be any vertex of $G$. Since $G$ does not contain a clique on $k+1$ vertices, there exist two neighbors $x,y$ of $v$ that are not adjacent in $G$. Denote $v_1=x$, $v_2=v$, and $v_3=y$. Let $P=v_1v_2v_3\dots v_r$ be a path starting with these three vertices and extending itself maximally, i.e. until some vertex $v_r$ whose all neighbors are already on $P$.

\textit{Case 1.} Suppose first that $r=n$, which means that $P$ contains all vertices of $G$, and let $v_j$ be any neighbor of $v_2$ other than $v_1$ and $v_3$ (it exists since $k\geq 3$). At first, give the vertices $v_1$ and $v_3$ the same color. Then apply procedures {\sc PathColor}$ (v_4,v_5,\dots,v_{j-1};v_j)$ and {\sc PathColor}$(v_n,v_{n-1},\dots,v_j;v_2)$. Finally, color the vertex $v_2$, which is possible because it has two neighbors in the same color. The entire graph $G$ is now colored.

\textit{Case 2.} Assume now that $r<n$. Recall that all neighbors of $v_r$ are on the path $P$. Let $v_j$ be the neighbor of $v_r$ with the smallest index. So, $C=v_jv_{j+1}\dots v_r$ is a cycle in $G$. Consider the subgraph $G'=G-C$ obtained by deleting all vertices of $C$. At first, color $G'$ using $k$ colors by the induction hypothesis. If there is no edge between $G'$ and $C$, then we are done by applying induction also to the subgraph induced by $C$. If, on the contrary, there is a vertex on $C$ with a neighbor in $G'$, then let $v_l$ be such vertex with the largest index, and let $u$ be any of its neighbors in $G'$. Notice that $l<r$ because $v_r$ has all of its neighbors on $C$. Since the vertex $v_{l+1}$ does not have neighbors in $G'$, we may assign it the same color as $u$. Now apply procedure {\sc PathColor}$(v_{l+2},\dots,v_r,v_j,\dots,v_{l-1};v_l)$ and finally color $v_l$, which is possible as it has two neighbors in the same color. As previously, the entire graph $G$ is colored and the proof is complete.
\end{proof}

The above proof can be readily converted into a coloring algorithm, with the \emph{depth-first search} (DFS) method used to
emulate the inductive argument. In order to streamline the implementation we may consider the following points:
 
\begin{itemize}
\item 
First, there is no actual need to check if the graph $G$ is regular. Contrariwise, if the vertex $v$ has degree strictly less than $k$, then we
can build a DFS tree with root at $v$ and color the vertices of $G$ greedily in the order of decreasing DFS labels.

\item
In Case 2 we do not stop to invoke the inductive assumption when we reach the end of the path $P$. Instead, we
let the DFS algorithm run till it labels all the vertices, and color greedily $v_n,v_{n-1}, \dots, v_{r+1}$
and $v_1,v_2, \dots, v_{j-1}$ before coloring $C$.

\item
If $\deg(v_r)=1$, then $C$ is a $K_2$ graph rather than a cycle, but then the last part of the proof works 
trivially ({\sc PathColor} has to color 0 vertices). Alternatively, we can restart the DFS algorithm 
with $v_r$ as the root.
\end{itemize}

\section{Generalizations}

Suppose that each vertex $v$ of a graph $G$ is given an arbitrary \emph{list} $L(v)$ of $k$ admissible colors. A \emph{list coloring} \cite{ErdosRT} of $G$ is a proper coloring in which color of any vertex is chosen from its list. Notice that the proof of Theorem \ref{Theorem Brooks} extends easily to the list coloring variant of Brooks' theorem. Indeed, {\sc PathColor} works exactly the same since each list, as previously, has $k$ colors. Furthermore, for the first step in Case 1 we may choose a pair of colors for $v_1$ and $v_3$ blocking at most one color from $L(v_2)$, while in Case 2 way we can analogously find such a color for $v_{l+1}$, that $v_{l+1}$ and $u$ block at most one color from $L(v_l)$.

Using the data structure provided by Skulrattanakulchai in the proof of Theorem 12 in \cite{Sku}, we can easily see that the coloring according
to our algorithm takes O$(m+n)$ time, where $m$ is the number of edges in $G$. The present algorithm cannot therefore be considerably 
faster than the one shown in \cite{Sku}, but it is arguably simpler both in terms
of the proof of correctness and the implementation, as it neither relies on finding a biconnected component of $G$ nor looks 
for any special subgraphs like cliques or theta graphs. 

Actually, our argument gives a similar result in a more general setting of \emph{correspondence colorings}, introduced recently by Dvo\v{r}\'{a}k and Postle in \cite{DvorakPostle}. Given a graph $G$ and a list assignment $L(v)$ we may define a \emph{constraint graph} $F=F(L,G)$ as follows. The vertices of $F$ are all the pairs $(v,a)$ with $a\in L(v)$. 
Now for any vertices $u$ and $v$ adjacent in $G$ we fix an arbitrary (possibly partial) matching between
$\{ u \} \times L(u)$ and $\{ v \} \times L(v)$; there are no other edges in $F$. 
 
A coloring $c$ of $G$ from lists $L(v)$ \emph{respects} $F$ if $(u,c(u))$ and $(v,c(v))$ are never adjacent in $F$. 
For instance, if $(u,a)$ is adjacent to $(v,b)$ in $F$ only when $a=b$, then a coloring of $G$ respecting $F$ is just the usual list coloring of $G$.

Notice that the matching condition in the definition of the constraint graph $F$ guarantees that if we color a single vertex $v$,
then the list of colors still available at any $u$ adjacent to $v$ is reduced at most by one element. Consequently,
{\sc PathColor} may be applied just as above for constructing a coloring respecting $F$. Hence we get the 
following result mentioned in \cite{DvorakPostle}.

\begin{theorem}
Let $k\geq 3$ be a natural number, and let $G$ be a graph with $\Delta(G)\leq k$ not containing a clique on $k+1$ vertices. Suppose that $L(v)$ is an arbitrary list assignment, with $\lvert L(v)\rvert =k$. Let $F$ be an arbitrary constraint graph for $(L,G)$. Then there is a list coloring of $G$ respecting $F$.
\end{theorem}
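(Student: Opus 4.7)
The plan is to transplant the proof of Theorem~\ref{Theorem Brooks} verbatim, replacing the two ``color coincidence'' tricks by correspondence analogues. The induction on $|V(G)|$, the reduction to the $k$-regular case (delete a vertex whose current list has more free colors than uncolored neighbors), and the construction of the maximal path $P=v_1v_2\dots v_r$ from a non-edge $\{x,y\}$ in $N(v)$ all carry over without change, since they are purely combinatorial statements about $G$. Likewise, the procedure \PC{} still works: when we color $v_i$ using some $a\in L(v_i)$, the matching between $\{v_i\}\times L(v_i)$ and $\{v_{i+1}\}\times L(v_{i+1})$ guarantees that at most one element of $L(v_{i+1})$ becomes forbidden at the next step, so each subsequent vertex along the path still has at least $k-1\ge 1$ admissible colors when its turn comes.

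The only place where the original argument used the equality ``same color'' is (i)~in Case~1, where $v_1$ and $v_3$ receive the same color in order to leave $v_2$ with a free choice, and (ii)~in Case~2, where $v_{l+1}$ is painted the same color as $u$ in order to leave $v_l$ with a free choice. Both steps have to be rephrased as ``choose colors so that the two vertices jointly forbid at most one color at the finishing vertex''. The key lemma I will establish is the following: if $w$ is a vertex with $|L(w)|=k$ and $x_1,x_2$ are two distinct neighbors of $w$ (whose colors we are free to pick) with $|L(x_1)|=|L(x_2)|=k$, then one can choose $a\in L(x_1)$ and $b\in L(x_2)$ such that the set of colors at $w$ forbidden by the matchings $M(x_1,w)$ and $M(x_2,w)$ has size at most one. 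This is the main content of the proof.

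To prove this selection lemma, I would fix any $a\in L(x_1)$; by the matching property it blocks at most one color $c^{*}\in L(w)$. A choice $b\in L(x_2)$ is \emph{bad} iff its $M(x_2,w)$-partner exists and differs from $c^{*}$. Each color in $L(w)\setminus\{c^{*}\}$ is partnered by at most one $b$, so there are at most $k-1$ bad choices; since $|L(x_2)|=k$, a good $b$ exists. Applying this with $(w,x_1,x_2)=(v_2,v_1,v_3)$ handles Case~1. For Case~2 the situation is slightly different because $u$ has already been colored during the inductive step on $G'$, so the color of $x_1=u$ is fixed; but the same counting applied only to the choice of $x_2=v_{l+1}$ still yields an admissible color, using $c^{*}\in L(v_l)$ as the (at most one) color forbidden by $u$. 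In both cases, after \PC{} finishes, the last vertex has at most $(k-2)+1=k-1$ forbidden colors among its $k$-element list, so it can be colored.

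The main obstacle, as in the original proof, is not any single step but rather making sure that the two delicate neighbors ($u$ and $v_{l+1}$) really can be arranged to jointly block only one color at $v_l$; the subtle point is that $u$ has already been colored in the recursive call, so we cannot re-pick it, and we must argue that the free choice at $v_{l+1}$ alone suffices. The counting argument above makes this essentially immediate, so once the selection lemma is in hand, the rest of the proof is a transcription of the proof of Theorem~\ref{Theorem Brooks}.
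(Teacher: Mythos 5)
Your proposal is correct and follows essentially the same route as the paper: the paper likewise observes that the matching condition makes \PC{} go through unchanged and that in Case~1 one can pick colors for $v_1,v_3$ blocking at most one color of $L(v_2)$, while in Case~2 one picks a color for $v_{l+1}$ so that it and the already-colored $u$ jointly block at most one color of $L(v_l)$. Your counting argument for the selection step simply makes explicit what the paper leaves as an easy verification.
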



It is perhaps worth mentioning that the above theorem covers an interesting special case of signed graphs. A \emph{signed graph} is a graph $G$ whose edges are labeled arbitrarily by the elements of the set $\{+,-\}$. A vertex coloring $c$ of a signed graph by real numbers is \emph{proper} if for every pair $u,v$ of adjacent vertices, $c(u)+c(v)\neq0$ or $c(u)-c(v)\neq0$, according to the label of the edge $uv$. This is clearly a special case of correspondence coloring. So, Theorem 2 extends some Brooks type theorems for signed graphs proved in \cite{MacajovaRS}, \cite{FleinerWiener} 
and \cite{Stiebitz}.

\end{document}